\numberwithin{equation}{section}
\newtheorem{Theorem}{Theorem}[section]
\newtheorem{Proposition}[Theorem]{Proposition}
\newtheorem{Lemma}[Theorem]{Lemma}
\newtheorem{Definition}[Theorem]{Definition}
\def\supp{{\rm supp}\ }
\def\bbZ{\mathbb{Z}}
\def\bbR{\mathbb{R}}
\begin{document}

\title[Multilinear Fractional Maximal Operators] { Characterization of a Two Weight Inequality for Multilinear Fractional Maximal Operators}
\thanks{This work
was supported partially by the National Natural Science Foundation of China (10990012)  and the Research Fund for the Doctoral Program
of Higher Education.}

\author{Kangwei Li}

\author{Wenchang Sun}
\thanks{Corresponding author: Wenchang Sun}
\email{likangwei9@mail.nankai.edu.cn, sunwch@nankai.edu.cn}

\address{Department of Mathematics and LPMC,  Nankai University,
      Tianjin~300071, China}

 \begin{abstract}
In this paper, we study the characterization of two weight inequality for multilinear fractional maximal operators.
We give a multilinear analogue of Sawyer's two weight test condition.
\end{abstract}

\keywords{
Two weight inequalities; multilinear fractional maximal operator.}
\maketitle

\section{Introduction and Main Results}
We begin with the $A_p$ condition introduced by Muckenhoupt. For a weight $w$, i.e. a non-negative locally integrable function,
 we call $w$ satisfies the $A_p$ condition
if
\[
  [w]_{A_p}:=\sup_{Q: \mbox{cubes in $\bbR^n$}}\left(\frac{1}{|Q|}\int_Q w(x)dx\right)\left(\frac{1}{|Q|}\int_Q w(x)^{1-p'}dx\right)^{p-1}<\infty,
\]
where $p'$ is the dual exponent of $p$ defined by the equation $1/p+1/{p'}=1$.
In \cite{Mu}, Muckenhoupt showed that the Hardy-Littlewood maximal function
\[
  Mf(x):=\sup_{Q\ni x}\frac{1}{|Q|}\int_Q |f(y)|dy
\]
is bounded on $L^p(w)$  if and only if $w$ satisfies the $A_p$ condition.

In \cite{MW}, Muckenhoupt and Wheeden characterized the weighted strong-type inequality for fractional operators
in terms of the so-called $A_{p,q}$ condition. For $0<\alpha<n$, $1<p<n/\alpha$ and $1/q=1/p-\alpha/n$, they showed that the
fractional maximal function
\[
   M_\alpha f(x):=\sup_{Q\ni x}\frac{1}{|Q|^{1-\alpha/n}}\int_Q |f(y)|dy
\]
is bounded from $L^p(w^p)$ to $L^{p}(w^q)$ if and only if $w$ satisfies the following  $A_{p,q}$ condition,
\[
  [w]_{A_{p,q}}:=\sup_{Q: \mbox{cubes in $\bbR^n$}}\left(\frac{1}{|Q|}\int_Q w(x)^qdx\right)\left(\frac{1}{|Q|}\int_Q w(x)^{-p'}dx\right)^{q/{p'}}<\infty.
\]

In \cite{Buckley}, Buckley showed that for $1<p<\infty$, $\|M\|_{L^p(w)\rightarrow L^p(w)}\le c[w]_{A_p}^{p'/p}$ and the exponent $p'/p$ is the best possible.
In \cite{LMPT}, Lacey, Moen, P\'erez and Torres obtained the analogous result for $M_\alpha$.
They proved the sharp inequality
$\|M_\alpha\|_{L^p(w^p)\rightarrow L^q(w^q)}\le c[w]_{A_{p,q}}^{\frac{p'}{q}(1-\alpha/n)}$.

For the two weight case, we can consider similar problem. That is,
to find a condition for a pair of weights $(u,v)$ such that $M_\alpha$ is bounded from
$L^p(u)$ to $L^q(v)$ for all $0\le\alpha<n$. In \cite{S}, Sawyer gave a characterization of a two weight inequality, showing that $M_\alpha$ is bounded
from $L^p(u)$ to $L^q(v)$ if and only if $(u,v)$ satisfies that
\[
  [u,v]_{S_{p,q}}:=\sup_{Q: \mbox{cubes in $\bbR^n$}}\frac{\left(\int_{Q}M_\alpha(\sigma 1_Q)^q vdx\right)^{1/q}}{\sigma(Q)^{1/{p}}}<\infty,
\]
where $\sigma=u^{1-p'}$, $0\le\alpha<n$, $1<p<n/\alpha$ and $1/q=1/p-\alpha/n$.
The above inequality is known as Sawyer's test condition.
In \cite{M1}, Moen improved Sawyer's result by showing that
\[
  \|M_\alpha\|_{L^p(u)\rightarrow L^q(v)}\asymp [u,v]_{S_{p,q}}.
\]
We refer the readers to \cite{LSSU1, LSU, NTV, S1, SSU} for more backgrounds and the new
breakthrough of the two weight characterization
 of singular integrals using Sawyer's test condition.

Now we move on the story to the multilinear case. We study the multilinear fractional maximal operator.
 For $0\le\alpha< mn$, the multilinear fractional maximal function $\mathcal{M}_\alpha$ is defined by
\[
   \mathcal{M}_\alpha (\vec f)(x)=\sup_{Q\ni x}\prod_{i=1}^m\frac{1}{|Q|^{1-\alpha/{mn}}}\int_Q |f_i(y_i)|dy_i.
\]
Specially, when $\alpha=0$, $\mathcal{M}_0$ is the multilinear maximal function denoted by $\mathcal{M}$ which is defined by
\[
  \mathcal{M}(f_1,\cdots, f_m)(x)=\sup_{Q\ni x}\prod_{i=1}^m\frac{1}{|Q|}\int_Q |f_i(y_i)|dy_i.
\]
The dyadic multilinear fractional maximal function is defined by
\[
  \mathcal{M}^{\mathscr{D}}_\alpha(f_1,\cdots, f_m)(x)=\sup_{Q\ni x,Q\in\mathscr{D}}\prod_{i=1}^m\frac{1}{|Q|^{1-\alpha/{mn}}}\int_Q |f_i(y_i)|dy_i,
\]
where $\mathscr{D}$ is a dyadic grid in $\bbR^n$, for which the
definition is given in the next section.

In \cite{LOPTT}, Lerner, Ombrosi, P\'erez, Torres and Trujillo-Gonz\'alez introduced the multiple $A_{\vec{P}}$ weights.
Let $\vec{P}=(p_1,\cdots,p_m)$ with $1\le p_1,\cdots,p_m<\infty$ and $1/{p_1}+\cdots+1/{p_m}=1/p$. Given $\vec{w}=(w_1,\cdots, w_m)$, set
\[
  v_{\vec{w}}=\prod_{i=1}^m w_i^{p/{p_i}}.
\]
We say that $\vec{w}$ satisfies the multilinear $A_{\vec{P}}$ condition if
\[
  [\vec{w}]_{A_{\vec{P}}}:=\sup_Q \left(\frac{1}{|Q|}\int_Q v_{\vec{w}}\right)\prod_{i=1}^m\left( \frac{1}{|Q|}\int_Q w_i^{1-p_i'}\right)^{p/{p_i'}}<\infty,
\]
when $p_i=1$, $( \frac{1}{|Q|}\int_Q w_i^{1-p_i'})^{1/{p_i'}}$ is
understood as $(\inf_Q w_i)^{-1}$. They proved that $\mathcal{M}$ is bounded from $L^{p_1}(w_1)\times\cdots\times L^{p_m}(w_m)$ to $L^p(v_{\vec{w}})$ if
and only if $\vec{w}\in A_{\vec{P}}$.

In \cite{M}, Moen introduced the multiple $A_{\vec{P},q}$ weight.
Let $1/p_1 + \cdots + 1/p_m = 1/q + \alpha/n$. A multiple weight $(w_1,\cdots, w_m)$ is said
to belong to the $A_{\vec{P},q}$ class if and only if
\[
 [\vec{w}]_{A_{\vec{P},q}}:= \sup_{Q}\left(\frac{1}{|Q|}\int_Q \Pi_{i=1}^m w_i^q
    \right)\prod_{i=1}^m \left(\frac{1}{|Q|}\int_Q w_i^{-p_i'}\right)^{q/{p_i'}}<\infty,
\]
Moen showed that
$\mathcal{M}_\alpha$ is bounded from $L^{p_1}(w_1^{p_1})\times\cdots\times L^{p_m}(w_m^{p_m})$
to $L^q(u_{\vec{w}})$ if and only if $ \vec{w}\in A_{\vec{P},q}$.

For the two weight case,
recently, Chen and Dami\'an \cite{CD} gave some sufficient conditions for
 the two weight inequality to hold for multilinear maximal operators.
In this paper, we prove the following result.
\begin{Theorem}\label{thm:main}
Suppose that $0\le \alpha<mn$, that $1<p_1,\cdots,p_m<\infty$, that $1/{p}=1/{p_1}+\cdots+1/{p_m}$, that $1/q=1/p-\alpha/n$ and that $q\ge \max_i\{p_i\}$.
Let $w_1$, $\cdots$, $w_m$, $v$ be weights and set $\sigma_i=w_i^{1-p_i'}$, $i=1,\cdots,m$. Define
\[
  [\vec{w},v]_{S_{\vec{P},q}}:=\sup_{Q: \mbox{cubes in $\bbR^n$}}\frac{\left(\int_{Q}\mathcal{M}_\alpha(\sigma_1 1_Q,\cdots,\sigma_m 1_Q)^q vdx\right)^{1/q}}{\prod_{i=1}^m \sigma_i(Q)^{1/{p_i}}}.
\]
Then $\mathcal{M}_\alpha$ is bounded from $L^{p_1}_{w_1}(\bbR^n)\times\cdots\times  L^{p_m}_{w_m}(\bbR^n)$ to $L^q_v(\bbR^n)$
if and only if $[\vec{w},v]_{S_{\vec{P},q}}$ is finite. Moreover,
\[
  \|\mathcal{M}_\alpha\|_{L^{p_1}(w_1)\times\cdots\times  L^{p_m}(w_m)\rightarrow L^q(v)}\asymp[\vec{w},v]_{S_{\vec{P},q}}.
\]
\end{Theorem}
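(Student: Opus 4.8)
The plan is to establish the two-sided equivalence by proving the two directions separately, with the necessity being essentially immediate and the sufficiency requiring the bulk of the work.

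\textbf{Necessity.} The direction that boundedness implies finiteness of $[\vec{w},v]_{S_{\vec{P},q}}$ is the easy half. I would fix a cube $Q$ and test the operator on the specific functions $f_i = \sigma_i 1_Q$. Since $\mathcal{M}_\alpha$ is assumed bounded, we have $\|\mathcal{M}_\alpha(\sigma_1 1_Q,\dots,\sigma_m 1_Q)\|_{L^q(v)} \lesssim \prod_{i=1}^m \|\sigma_i 1_Q\|_{L^{p_i}(w_i)}$. The key computation is that $\|\sigma_i 1_Q\|_{L^{p_i}(w_i)}^{p_i} = \int_Q \sigma_i^{p_i} w_i = \int_Q w_i^{(1-p_i')p_i} w_i = \int_Q w_i^{1-p_i'} = \sigma_i(Q)$, using $(1-p_i')p_i + 1 = 1 - p_i'$. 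Restricting the left-hand $L^q(v)$ norm to integration over $Q$ only and dividing by $\prod_i \sigma_i(Q)^{1/p_i}$ yields the testing bound uniformly in $Q$, so $[\vec{w},v]_{S_{\vec{P},q}} \le \|\mathcal{M}_\alpha\|$.

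\textbf{Sufficiency.} This is where the real work lies. I would first reduce to the dyadic operator $\mathcal{M}^{\mathscr{D}}_\alpha$: by a standard argument (a finite family of shifted dyadic grids covers all cubes up to a bounded dilation factor), it suffices to prove the estimate for each $\mathcal{M}^{\mathscr{D}}_\alpha$ with a constant controlled by the testing constant, and the testing constants for the dyadic and continuous operators are comparable. Working dyadically, the standard approach is a Calder\'on--Zygmund-type stopping-time / principal-cubes decomposition. For a fixed function vector, I would linearize $\mathcal{M}^{\mathscr{D}}_\alpha$ by, for each point, selecting a maximizing dyadic cube, and then organize these cubes into sparse families via a stopping-time construction on the product of averages $\prod_i \frac{1}{|Q|^{1-\alpha/mn}}\int_Q \sigma_i f_i$. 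The principal cubes $\{Q_k^j\}$ are chosen so that consecutive generations increase this quantity by a fixed factor, giving the crucial sparseness: the sets $E(Q_k^j)$ where $Q_k^j$ is the selected cube are pairwise disjoint and carry a fixed proportion of the measure.

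\textbf{The main obstacle} will be summing the linearized pieces over the principal cubes and bounding the total by the testing constant. After the decomposition, the $L^q(v)$ norm of $\mathcal{M}^{\mathscr{D}}_\alpha(\vec f)$ reduces to a sum over principal cubes $Q$ of terms like $\big(\prod_i \frac{1}{|Q|^{1-\alpha/mn}}\int_Q \sigma_i f_i\, w_i\,\big)^q v(E(Q))$, where I first replace $f_i$ by $\sigma_i^{-1}g_i$ so that the averages become $\frac{1}{|Q|^{1-\alpha/mn}}\int_Q g_i$ against $\sigma_i\,dx$ and the input norms become $\|g_i\|_{L^{p_i}(\sigma_i)}$. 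The heart is to bound $v(E(Q))$ or rather $\int_{E(Q)}\mathcal{M}_\alpha(\sigma_1 1_Q,\dots,\sigma_m 1_Q)^q v$ from below/above using the testing condition so that $[\vec{w},v]_{S_{\vec{P},q}}^q \prod_i \sigma_i(Q)^{q/p_i}$ controls the local contribution. The hypothesis $q\ge\max_i\{p_i\}$ is precisely what I expect to be needed here: because $q\ge p_i$ for each $i$, the exponent $q/p_i\ge 1$ allows me, after applying the testing condition cube by cube, to pass from the sum $\sum_Q \prod_i \sigma_i(Q)^{q/p_i}(\text{averaged }g_i)^q$ to a product of discrete $\ell^{p_i}$-type sums via H\"older's inequality in the Carleson-embedding / dualization step, and then invoke a weighted Carleson embedding theorem (using sparseness of the $E(Q)$ with respect to the measures $\sigma_i\,dx$) to bound each factor by $\|g_i\|_{L^{p_i}(\sigma_i)}^{p_i}$. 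Combining these gives $\prod_i \|g_i\|_{L^{p_i}(\sigma_i)} = \prod_i \|f_i\|_{L^{p_i}(w_i)}$ times the testing constant, completing the proof; I would be especially careful that the Carleson-embedding step genuinely requires $q/p_i\ge 1$ and does not hold without it.
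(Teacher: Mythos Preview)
Your necessity argument and the reduction to the dyadic operator via shifted grids are correct and match the paper. The sparse/stopping-time linearization you describe is also the right first move: the paper reduces to bounding
\[
\sum_{Q\in\mathcal{S}}\Big(\prod_{i}E_Q^{\sigma_i}|f_i|\Big)^q
\Big(\prod_i\frac{\sigma_i(Q)}{|Q|^{1-\alpha/(mn)}}\Big)^q v(E(Q)).
\]
The gap is in your ``main obstacle'' paragraph. You propose to apply the testing bound cube by cube and then split via H\"older into a product of single-measure Carleson embeddings. But the testing condition only gives you the \emph{coupled} Carleson-type bound
\[
\sum_{Q\subset R}\Big(\prod_i\frac{\sigma_i(Q)}{|Q|^{1-\alpha/(mn)}}\Big)^q v(E(Q))
\;\lesssim\;[\vec w,v]_{S_{\vec P,q}}^{\,q}\prod_i\sigma_i(R)^{q/p_i},
\]
and from this you cannot read off $m$ separate Carleson conditions, one per measure $\sigma_i$, which is what your H\"older-then-embed scheme would require. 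The sentence ``pass from $\sum_Q \prod_i \sigma_i(Q)^{q/p_i}(\text{averaged }g_i)^q$ to a product of discrete $\ell^{p_i}$ sums'' describes no actual inequality: after applying the testing condition you have a bound, not that expression, and H\"older on the averages does not decouple the coefficients.

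What the paper does instead is an \emph{iterated Sawyer/Marcinkiewicz interpolation}. First it freezes $f_1=1_R$ and, for the sublinear map $f\mapsto\{E_Q^{\sigma_2}|f|\}$ with the measure $\gamma_j^k=\big(\frac{\sigma_1(Q)\sigma_2(Q)}{|Q|^{2-\alpha/n}}\big)^q v(E(Q))$, proves trivially strong type $(\infty,\infty)$ and, using maximal cubes plus the testing condition, weak type $(1,q/p_2)$; interpolation (this is exactly where $q\ge p_2$ is used, so that $p_2\le q$) gives strong type $(p_2,q)$, i.e.\ a local estimate
\[
\|1_R\,\mathcal{M}_\alpha^{\mathscr D}(1_R\sigma_1,f\sigma_2)\|_{L^q(v)}\lesssim[\vec w,v]_{S_{\vec P,q}}\,\sigma_1(R)^{1/p_1}\|f\|_{L^{p_2}(\sigma_2)}.
\]
Then the same interpolation game is played in the first variable, with the weights $\eta_j^k$ now built from the just-proved local estimate; the weak $(1,q/p_1)$ bound comes from applying that lemma inside each maximal cube, and interpolation (using $q\ge p_1$) yields the full strong bound. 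So the role of $q\ge\max_i p_i$ is not to license a H\"older step but to make the Marcinkiewicz interpolation land at the right exponents at each stage. Your outline is missing precisely this inductive, one-variable-at-a-time interpolation mechanism.
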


In the rest of this paper, we give a proof of Theorem~\ref{thm:main}.
And in Section 4, we give other test conditions which cover all indices.

\section{Preliminary Results}
By a general dyadic grid $\mathscr{D}$ we mean a collection of cubes with the following
properties: (i) for any $Q\in\mathscr{D}$ its sidelength $l_Q$ is of the form $2^k$, $k\in\bbZ$; (ii)
 $Q\cap R \in \{Q,R,\emptyset\}$ for any $Q,R\in\mathscr{D}$; (iii) the cubes of a fixed sidelength $2^k$ form a partition
of $\bbR^n$.

We say that  $\mathcal{S}:=\{Q_{j,k}\}$ is a sparse family of cubes if:
\begin{enumerate}
\item for each fixed $k$ the cubes $Q_{j,k}$ are pairwise disjoint;
\item if $\Gamma_k=\bigcup_j Q_{j,k}$, then $\Gamma_{k+1}\subset \Gamma_k$;
\item $|\Gamma_{k+1}\bigcap Q_{j,k}|\le \frac{1}{2}|Q_{j,k}|$.
\end{enumerate}
For any $Q_{j,k}\in\mathcal{S}$, we define $E(Q_{j,k})=Q_{j,k}\setminus \Gamma_{k+1}$.
Then the sets $E(Q_{j,k})$ are pairwise disjoint and $|E(Q_{j,k})|\ge \frac{1}{2}|Q_{j,k}|$.

Define
\[
  \mathscr{D}_t:=\{2^{-k}([0,1)^n+m+(-1)^k t): k\in\bbZ, m\in\bbZ^n\},\quad t\in\{0, 1/3\}^n.
\]
The importance of these grids is shown by the following proposition, which
 can be found in \cite[proof of Theorem 1.10]{HP}, see also \cite[Proposition 5.1]{L}.
\begin{Proposition}\label{prop:p1}
There are $2^n$ dyadic grids $\mathscr{D}_t$, $t\in\{0,1/3\}^n$ such that for any cube
$Q\subset \bbR^n$ there exists a cube $Q_t \in \mathscr{D}_t$ satisfying
 $Q \subset Q_t$ and $l(Q_t)\le 6l(Q)$.
\end{Proposition}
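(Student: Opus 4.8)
The plan is to reduce the $n$-dimensional statement to a one-dimensional "$1/3$-trick" by exploiting the product structure of the grids $\mathscr{D}_t$. Observe that a cube in $\mathscr{D}_t$ has the form $2^{-k}([0,1)^n+m+(-1)^k t)$ with $k\in\bbZ$ and $m\in\bbZ^n$, so it is a Cartesian product of $n$ one-dimensional intervals that all share the same scale $2^{-k}$ and the same shift sign $(-1)^k$; only the one-dimensional shift $t_i\in\{0,1/3\}$ may vary from coordinate to coordinate. Since $Q$ is a cube, writing $Q=\prod_{i=1}^n I_i$, all edges $I_i$ have a common length $\ell:=l(Q)$, which is precisely what makes it possible to treat every coordinate at one and the same dyadic scale.

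First I would fix that common scale. Because the half-open interval $[3\ell,6\ell)$ has the form $[x,2x)$, it contains exactly one power of $2$; choose $k\in\bbZ$ with $3\ell\le 2^{-k}<6\ell$. The eventual containing cube will then have sidelength $2^{-k}<6\ell=6\,l(Q)$, which already delivers the required bound $l(Q_t)\le 6\,l(Q)$.

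The heart of the matter is the following one-dimensional separation lemma, applied at this fixed scale. At scale $2^{-k}$ the grid $\mathscr{D}_0$ has left endpoints $2^{-k}\bbZ$, while $\mathscr{D}_{1/3}$ has left endpoints $2^{-k}(\bbZ+(-1)^k/3)$. For any integers $m,m'$ one has $|m-m'-(-1)^k/3|\ge 1/3$, so the endpoint sets of the two one-dimensional grids are separated by at least $2^{-k}/3$. Say that an interval $I=[a,b]$ of length $\ell$ is \emph{cut} by a grid if that grid's endpoint set meets the half-open interval $(a,b]$; an interval is contained in a single grid cube exactly when it is not cut. If $I$ were cut by both grids it would contain, in $(a,b]$, one endpoint from each grid, hence two points at distance $<\ell$; combined with the separation this forces $2^{-k}/3<\ell$, i.e. $2^{-k}<3\ell$, contradicting $2^{-k}\ge 3\ell$. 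Therefore for each coordinate $i$ there is a choice $t_i\in\{0,1/3\}$ for which $I_i$ is not cut, i.e. $I_i$ is contained in a single interval $J_i$ of the one-dimensional grid of scale $2^{-k}$ with shift $(-1)^k t_i$.

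Finally I would assemble the coordinates. Setting $t=(t_1,\dots,t_n)\in\{0,1/3\}^n$, the product $J:=\prod_{i=1}^n J_i$ equals $2^{-k}([0,1)^n+m+(-1)^k t)$ for a suitable $m\in\bbZ^n$, so $J\in\mathscr{D}_t$; moreover $Q=\prod_i I_i\subseteq\prod_i J_i=J$ and $l(J)=2^{-k}<6\,l(Q)$. Since $t$ ranges over $\{0,1/3\}^n$, this uses exactly $2^n$ grids, which proves the proposition. The main obstacle, and the reason the separation lemma is needed at all, is precisely that a single scale $k$ must serve all $n$ coordinates simultaneously (a cube, not a box, is required): one cannot tune the scale per coordinate, so the separation estimate is what guarantees that at this one scale every edge admits a good shift. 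A secondary point to verify carefully is the half-open-endpoint bookkeeping in the definition of "cut," together with the fact that each $\mathscr{D}_t$ is genuinely a dyadic grid in the sense of Section~2 — here the alternating sign $(-1)^k$ is essential, since it is what makes the shifted family nested across scales.
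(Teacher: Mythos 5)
Your argument is correct and is essentially the standard one-third-trick proof from \cite{HP} and \cite{L} that the paper cites rather than reproves: fix a single scale $2^{-k}$ with $3l(Q)\le 2^{-k}<6l(Q)$, use the coordinatewise separation of at least $2^{-k}/3$ between the endpoint sets of the two one-dimensional grids to find a good shift $t_i\in\{0,1/3\}$ for each edge, and take the product cube, with the half-open bookkeeping and the strict/non-strict inequalities all handled correctly. The one step you flag but do not carry out --- that each $\mathscr{D}_t$ is genuinely nested across scales --- is a one-line computation worth recording, since it is part of the proposition's assertion: a left endpoint $2^{-k}(m+(-1)^k t)$ at scale $2^{-k}$ equals $2^{-k-1}(m'+(-1)^{k+1}t)$ with $m'=2m+3(-1)^k t\in\bbZ$ precisely because $3t\in\{0,1\}$, which is exactly where the value $1/3$ and the alternating sign $(-1)^k$ enter.
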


For any weight $\sigma$, cube $Q$ and locally integrable function $f$ with respect to the measure $\sigma dx$, define the average $\mathbb{E}_{Q}^\sigma f:=\sigma(Q)^{-1}\int_Q f\sigma$.
First we give the definition of the principal cubes, which is introduced in \cite[Definition 8.2]{HLMORSU}.
\begin{Definition}[Principal cubes]\label{def:d1}
We form the collection $\mathcal{G}$ of principal cubes as follows.
Let $\mathcal{G}_0:=\{ \overline{Q}\}$ (the maximal dyadic cube that we consider). And inductively,
\[
  \mathcal{G}_k:=\bigcup_{G\in \mathcal{G}_{k-1}}\{G'\subset G: \mathbb{E}_{G'}^\sigma |f|>4 \mathbb{E}_{G}^\sigma |f|, G' \mbox{is a maximal such dyadic cube}\}.
\]
Let $\mathcal{G}:=\bigcup_{k=0}^\infty \mathcal{G}_k$.
For any dyadic $Q(\subset \overline{Q})$, we let
\[
  \Gamma(Q):=\mbox{the minimal principal cube containing $Q$}.
\]
\end{Definition}
It follows from the definition that
\[
  \mathbb{E}_{Q}^\sigma |f|\le 4\mathbb{E}_{\Gamma(Q)}^\sigma |f|.
\]

By the definition, if $f\in L^p(\sigma)$, we immediately have
\begin{equation}\label{eq:e2}
\sum_{G\in\mathcal{G}}(E_G^\sigma |f|)^p\sigma(G)\le C\|M_\sigma^{\mathscr{D}}f\|_{L^p(\sigma)}^p\le C_p\|f\|_{L^p(\sigma)}^p.
\end{equation}

\begin{Proposition}\cite[Theorem 1.3.1]{BL}\label{prop:inter}
Assume that $p_0\neq p_1$ and that
\begin{eqnarray*}
&& T: L^{p_0}(U, d\mu)\rightarrow L^{q_0, \infty}(V, d\nu) \quad\mbox{with norm $M_0$,}\\
&& T: L^{p_1}(U, d\mu)\rightarrow L^{q_1, \infty}(V, d\nu) \quad\mbox{with norm $M_1$.}
\end{eqnarray*}
Put
\[
  \frac 1 p=\frac{1-\theta}{p_0}+\frac \theta {p_1}, \quad\frac 1 q=\frac{1-\theta}{q_0}+\frac \theta {q_1},
\]
and assume that $p\le q$.
Then
\[
  T: L^{p}(U, d\mu)\rightarrow L^{q}(V, d\nu)
\]
with norm $M$ satisfying
\[
  M\le C_\theta M_0^{1-\theta}M_1^\theta.
\]
\end{Proposition}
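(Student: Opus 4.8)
\proof
The plan is to prove this Marcinkiewicz-type statement by the classical distribution-function method, which uses only that $T$ is quasi-linear and which produces the geometric-mean constant directly. By the symmetry $\theta\leftrightarrow 1-\theta$ I may assume $p_0<p_1$, and since $1/p$ is a convex combination of $1/p_0$ and $1/p_1$ this gives $p_0<p<p_1$; by homogeneity I may normalize $\|f\|_{L^p(d\mu)}=1$. The point of departure is the layer-cake identity
\[
  \|Tf\|_{L^q(d\nu)}^q=q\int_0^\infty t^{q-1}\lambda_{Tf}(t)\,dt,\qquad \lambda_{Tf}(t):=\nu(\{|Tf|>t\}),
\]
so that everything reduces to estimating the distribution function $\lambda_{Tf}$.

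First I would split $f$ at a height growing with $t$. Fixing an exponent $\gamma>0$ and a scale $\delta>0$ to be chosen, for each $t>0$ I set $g_t=f\mathbf 1_{\{|f|>\delta t^\gamma\}}$ and $h_t=f-g_t$; since $p_0<p<p_1$, the large part $g_t$ lies in $L^{p_0}(d\mu)$ and the small part $h_t$ in $L^{p_1}(d\mu)$. Quasi-linearity gives $\lambda_{Tf}(t)\le\lambda_{Tg_t}(t/c)+\lambda_{Th_t}(t/c)$ for a fixed $c$, and the two weak-type hypotheses turn this into
\[
  \lambda_{Tf}(t)\le\Big(\frac{cM_0\|g_t\|_{p_0}}{t}\Big)^{q_0}+\Big(\frac{cM_1\|h_t\|_{p_1}}{t}\Big)^{q_1}.
\]
Inserting this into the layer-cake integral breaks $\|Tf\|_q^q$ into a piece $J_0$ carrying $M_0^{q_0}$ and a piece $J_1$ carrying $M_1^{q_1}$. (If $p_1=\infty$ one uses $\|h_t\|_\infty\le\delta t^\gamma$ in place of the second weak bound, and the cases $q_i=\infty$ are handled by the obvious modifications.)

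What remains is to show $J_0,J_1\lesssim\|f\|_p^q$. The exponent $\gamma$ is not free: writing
\[
  J_1=cM_1^{q_1}\int_0^\infty t^{q-1-q_1}\Big(\int_{\{|f|\le\delta t^\gamma\}}|f|^{p_1}\,d\mu\Big)^{q_1/p_1}\,dt
\]
(and symmetrically for $J_0$), one sees that only a single value of $\gamma$, dictated by the relations $1/p=(1-\theta)/p_0+\theta/p_1$ and $1/q=(1-\theta)/q_0+\theta/q_1$, lets the $t$-integration collapse the spatial integral back to $\int|f|^p\,d\mu$. When $q_i=p_i$ this collapse is a direct Fubini computation; the hard part is that in general the power $q_i/p_i\ne1$ sits outside the spatial integral and forbids a naive interchange.

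This exponent mismatch is the crux, and it is the one place where the hypothesis $p\le q$ is used; note that $p\le q$ is genuinely weaker than the pointwise conditions $q_i\ge p_i$, so a term-by-term Minkowski inequality does not suffice. The correct tool is Hardy's inequality applied to the nonincreasing rearrangement of $f$: after the $t$-integration, the collapse of $J_0$ and $J_1$ to multiples of $\int|f|^p\,d\mu$ rests on a one-dimensional Hardy inequality valid precisely when $q\ge p$ (reflecting the Lorentz embedding $L^p=L^{p,p}\hookrightarrow L^{p,q}$ that lets $L^p$ be fed into the weak bounds). Finally, the scale $\delta$ enters $J_0$ and $J_1$ with opposite-sign powers, so a crude bound $J_0+J_1\lesssim(M_0^{q_0}\delta^{-a}+M_1^{q_1}\delta^{b})\|f\|_p^q$ can be optimized in $\delta$; the interpolation relations make the balanced value equal to $C_\theta\big(M_0^{1-\theta}M_1^\theta\big)^q\|f\|_p^q$, which after taking $q$-th roots is the asserted estimate.
\endproof
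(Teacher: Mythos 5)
Your proposal is the classical Marcinkiewicz argument---layer-cake formula, splitting $f$ at the $t$-dependent height $\delta t^{\gamma}$ with $\gamma$ forced by the interpolation relations, Hardy's inequality on the rearrangement as the step where $p\le q$ enters, and optimization in $\delta$ to produce the geometric mean $M_0^{1-\theta}M_1^{\theta}$---and this is essentially the proof of the cited result \cite[Theorem 1.3.1]{BL}; the paper itself gives no proof beyond that citation, so your route coincides with the paper's. One point worth recording: both the statement as transcribed and your argument tacitly require $q_0\neq q_1$ (otherwise $\gamma=0$ and your $t$-integrals carry the divergent power $t^{-1}$; indeed the conclusion is then false, as shown by $Tf(y)=|y|^{-1/q}\inf\{\|f_0\|_{p_0}+\|f_1\|_{p_1}:f=f_0+f_1\}$ with $p_0<p_1\le q$, which is sublinear and of weak types $(p_0,q)$ and $(p_1,q)$ but of strong type $(p,q)$ for no intermediate $p$), though this is harmless in the present paper, which only invokes the proposition with $q_0=\infty$ and $q_1=q/p_i<\infty$.
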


\section{Proof of Theorem~\ref{thm:main}}
We only prove Theorem~\ref{thm:main} for $m=2$, since the general case can be proved similarly.
Firstly, we prove the following lemma.
\begin{Lemma}\label{lm:l}
Suppose that $0\le \alpha<2n$, that $1<p_1,p_2<\infty$, that $1/{p}=1/{p_1}+1/{p_2}$, that $1/q=1/p-\alpha/n$ and that $q\ge p_2$.
Let $(w_1,w_2, v)$ be weights and set $\sigma_i=w_i^{1-p_i'}$.
Then for dyadic grid $\mathscr{D}$ and function $f$ with $\supp f\subset R \in\mathscr{D}$,
\[
  \|1_R\mathcal{M}_\alpha^{\mathscr{D}}(1_{R}\sigma_1, f\sigma_2)\|_{L^q(v)}\lesssim[\vec{w},v]_{S_{\vec{P},q}}\sigma_1(R)^{1/{p_1}}\|f\|_{L^{p_2}(\sigma_2)}.
\]
\end{Lemma}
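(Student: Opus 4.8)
The plan is to exploit the fact that exactly one of the two entries, namely $f\sigma_2$, is a genuine function while the other is the frozen density $1_R\sigma_1$, and to linearize the dyadic maximal operator by a Calder\'on--Zygmund stopping time adapted to $(|f|,\sigma_2)$. Write $g=|f|$ and observe that for $x\in R$ every dyadic cube $Q\ni x$ is nested with $R$, hence is either contained in $R$ or contains $R$. Since $\mathcal{M}_\alpha^{\mathscr{D}}$ is a supremum and $(\max(a,b))^q\le a^q+b^q$, I would split the $L^q(v)$ bound into the contribution of cubes $Q\subseteq R$ and of cubes $Q\supseteq R$.

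The cubes $Q\supseteq R$ give an easy term: since $\supp f\subset R$, both $\int_Q 1_R\sigma_1$ and $\int_Q g\sigma_2$ are frozen at their values on $R$, so the supremum over such $Q$ is attained at $Q=R$ and equals the constant $\sigma_1(R)(\int_R g\sigma_2)/|R|^{2-\alpha/n}$ throughout $R$. Its $L^q(v)$ norm is this constant times $v(R)^{1/q}$; bounding $\int_R g\sigma_2\le\|f\|_{L^{p_2}(\sigma_2)}\sigma_2(R)^{1/p_2'}$ by H\"older and using the lower bound $\mathcal{M}_\alpha(\sigma_1 1_R,\sigma_2 1_R)\ge\sigma_1(R)\sigma_2(R)/|R|^{2-\alpha/n}$ on $R$ (take $Q=R$ in the supremum) together with the testing inequality at $R$ produces exactly $[\vec w,v]_{S_{\vec P,q}}\sigma_1(R)^{1/p_1}\|f\|_{L^{p_2}(\sigma_2)}$.

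For the main term I would form the principal cubes $\mathcal{G}$ of Definition~\ref{def:d1} for the pair $(g,\sigma_2)$ with top cube $\overline{Q}=R$, so that $\mathbb{E}_Q^{\sigma_2}g\le 4\mathbb{E}_{\Gamma(Q)}^{\sigma_2}g$. For any dyadic $Q\ni x$ with $Q\subseteq R$ and $G:=\Gamma(Q)$, replacing $\mathbb{E}_Q^{\sigma_2}g$ by $4\mathbb{E}_G^{\sigma_2}g$ and then recognising that $\int_Q\sigma_i 1_G=\sigma_i(Q)$ (as $Q\subseteq G$) gives the pointwise linearization
\[
  \sup_{Q\ni x,\,Q\subseteq R}\frac{\sigma_1(Q)\int_Q g\sigma_2}{|Q|^{2-\alpha/n}}\le 4\sup_{G\in\mathcal{G},\,G\ni x}\mathbb{E}_G^{\sigma_2}g\cdot\mathcal{M}_\alpha^{\mathscr{D}}(\sigma_1 1_G,\sigma_2 1_G)(x).
\]
Raising to the power $q$, dominating the supremum over $G$ by the $\ell^q$ sum, integrating against $v\,dx$ over $R$, and invoking the testing condition at each principal cube $G$ (via $\mathcal{M}_\alpha^{\mathscr{D}}\le\mathcal{M}_\alpha$) reduces everything to
\[
  \sum_{G\in\mathcal{G}}(\mathbb{E}_G^{\sigma_2}g)^q\sigma_1(G)^{q/p_1}\sigma_2(G)^{q/p_2}\lesssim\sigma_1(R)^{q/p_1}\|f\|_{L^{p_2}(\sigma_2)}^q.
\]

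The hard part is this final summation, and it is exactly where the hypothesis $q\ge p_2$ is decisive. I would bound $\sigma_1(G)^{q/p_1}\le\sigma_1(R)^{q/p_1}$ and pull it out, then write the remaining factor as $a_G^{q/p_2}$ with $a_G:=(\mathbb{E}_G^{\sigma_2}g)^{p_2}\sigma_2(G)$. Because $q/p_2\ge 1$, superadditivity ($\ell^1\hookrightarrow\ell^{q/p_2}$) gives $\sum_G a_G^{q/p_2}\le(\sum_G a_G)^{q/p_2}$, while the Carleson embedding \eqref{eq:e2} for the single function $g$ bounds $\sum_G a_G\le C\|f\|_{L^{p_2}(\sigma_2)}^{p_2}$; raising to the power $q/p_2$ closes the estimate. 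Taking $q$-th roots and combining with the easy term finishes the proof. The only genuine subtlety to watch is the reduction of the off-diagonal supremum over $\{Q:\Gamma(Q)=G\}$ to the on-diagonal testing quantity $\mathcal{M}_\alpha^{\mathscr{D}}(\sigma_1 1_G,\sigma_2 1_G)$; after that the argument is simply the interplay between the principal-cube Carleson bound and the embedding coming from $q\ge p_2$.
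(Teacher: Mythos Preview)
Your argument is correct, but it follows a genuinely different route from the paper's. The paper linearizes $\mathcal{M}_\alpha^{\mathscr{D}}$ via the sparse family $\{Q_j^k\}$ coming from the level sets $\Omega_k$, rewrites the main term as $\sum_{k,j}(E_{Q_j^k}^{\sigma_2}|f|)^q\gamma_j^k$, and then runs Sawyer's interpolation trick: the sublinear map $T f=\{E_{Q_j^k}^{\sigma_2}|f|\}$ is trivially $(\infty,\infty)$ bounded and, by the testing condition applied on disjoint maximal cubes $I_i\subset R$, of weak type $(1,q/p_2)$; Marcinkiewicz interpolation (Proposition~\ref{prop:inter}, which requires $p_2\le q$) then delivers the strong $(p_2,q)$ bound. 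You instead use the principal-cube stopping time directly on $(|f|,\sigma_2)$, majorize pointwise by $\sup_{G\ni x}\mathbb{E}_G^{\sigma_2}g\cdot\mathcal{M}_\alpha^{\mathscr{D}}(\sigma_1 1_G,\sigma_2 1_G)(x)$, pass to an $\ell^q$ sum, and close with the crude bound $\sigma_1(G)\le\sigma_1(R)$ together with the embedding $\ell^1\hookrightarrow\ell^{q/p_2}$ and the Carleson estimate \eqref{eq:e2}. Both arguments use $q\ge p_2$ in the same essential place (the paper for the interpolation hypothesis $p\le q$, you for the $\ell^{q/p_2}$ embedding). Your route is more elementary---no interpolation theorem---and foreshadows the principal-cube machinery the paper itself deploys in Section~4; the paper's route is sharper in that it exploits the disjointness of the stopping cubes to sum $\sigma_1(I_i)$ rather than bounding each $\sigma_1(G)$ by $\sigma_1(R)$, though that extra precision is not needed here since $\sigma_1$ enters only as a frozen density.
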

\begin{proof}
Similarly to  \cite{LMS}, let $a=2^{(2-\alpha/n)(n+1)}$ and
\[
  \Omega_k=\{x\in\bbR^n: \mathcal{M}_\alpha^{\mathscr{D}}(1_{R}\sigma_1, f\sigma_2)(x)>a^k\}.
\]
Then we have $\Omega_k=\cup_j Q_j^k$, where $Q_j^k$ are disjoint maximal dyadic cubes in $\Omega_k$
and $\{Q_j^k\}$ is a sparse family in $\mathscr{D}$ and
\[
  a^k<\frac{1}{|Q_j^k|^{2-\alpha/n}}\int_{Q_j^k}1_{R}\sigma_1\int_{Q_j^k}|f|\sigma_2\le 2^{2n-\alpha}a^k.
\]
It follows that
\begin{eqnarray*}
\mathcal{M}_\alpha^{\mathscr{D}}(1_{R}\sigma_1, f\sigma_2)&\asymp&
\sum_{k,j}\left(\frac{1}{|Q_j^k|^{2-\alpha/n}}\int_{Q_j^k}1_{R}\sigma_1\int_{Q_j^k}|f|\sigma_2\right) 1_{E(Q_j^k)}.
\end{eqnarray*}
Therefore,
\begin{eqnarray*}
&&1_R\mathcal{M}_\alpha^{\mathscr{D}}(1_{R}\sigma_1, f\sigma_2)\\&\asymp&
\sum_{k,j: Q_j^k\subset R}\left(\frac{1}{|Q_j^k|^{2-\alpha/n}}\int_{Q_j^k}1_{R}\sigma_1\int_{Q_j^k}|f|\sigma_2\right) 1_{E(Q_j^k)}\\
&&\quad+\sum_{k,j: Q_j^k\supsetneq R}\left(\frac{1}{|Q_j^k|^{2-\alpha/n}}\int_{Q_j^k}1_{R}\sigma_1\int_{Q_j^k}|f|\sigma_2\right)1_{E(Q_j^k)\cap R}\\
&\lesssim&\sum_{k,j: Q_j^k\subset R}\left(\frac{1}{|Q_j^k|^{2-\alpha/n}}\int_{Q_j^k}1_{R}\sigma_1\int_{Q_j^k}|f|\sigma_2\right) 1_{E(Q_j^k)}\\
&&\quad+\left(\frac{1}{|R|^{2-\alpha/n}}\sigma_1(R)\int_{R}|f|\sigma_2\right) 1_{R}.
\end{eqnarray*}
Consequently, we have
\begin{eqnarray*}
\int_{R}\mathcal{M}_\alpha^{\mathscr{D}}(1_{R}\sigma_1, f\sigma_2)^q v dx
&\lesssim&\sum_{k,j: Q_j^k \subset R}\left(\frac{\int_{Q_j^k}1_{R}\sigma_1\int_{Q_j^k}|f|\sigma_2}{|Q_j^k|^{2-\alpha/n}} \right)^q v(E(Q_j^k))\\
&&\quad
 +\int_{R}\mathcal{M}_\alpha^{\mathscr{D}}(1_{R}\sigma_1, 1_R\sigma_2)^q v dx \bigg(\frac{1}{\sigma_2(R)}\int_R |f_2|\sigma_2\bigg)^q\\
&\le&\sum_{k,j: Q_j^k \subset R}\left(\frac{\int_{Q_j^k}1_{R}\sigma_1\int_{Q_j^k}|f|\sigma_2}{|Q_j^k|^{2-\alpha/n}} \right)^q v(E(Q_j^k))\\
&&\quad+[\vec{w},v]_{S_{\vec{P},q}}^q\sigma_1(R_1)^{q/{p_1}}\|f\|_{L^{p_2}(\sigma_2)}^q.
\end{eqnarray*}
Now we reduce the problem to estimate the following, followed by Sawyer's technique \cite{S}, we have
\begin{eqnarray*}
&&\sum_{k,j: Q_j^k \subset R}\left(\frac{\int_{Q_j^k}1_{R}\sigma_1\int_{Q_j^k}|f|\sigma_2}{|Q_j^k|^{2-\alpha/n}} \right)^q v(E(Q_j^k))\\
&=&\sum_{k,j: Q_j^k \subset R}(E_{Q_j^k}^{\sigma_2}|f|)^q\left(\frac{\sigma_1(Q_j^k)\sigma_2(Q_j^k)}{|Q_j^k|^{2-\alpha/n}} \right)^q v(E(Q_j^k))\\
&=&\sum_{k,j: Q_j^k \subset R}(E_{Q_j^k}^{\sigma_2}|f|)^q \gamma_j^k,
\end{eqnarray*}
where
\[
  \gamma_j^k:=\left(\frac{\sigma_1(Q_j^k)\sigma_2(Q_j^k)}{|Q_j^k|^{2-\alpha/n}} \right)^q v(E(Q_j^k)).
\]
Now let $\Omega_R:=\{(k,j): Q_j^k\subset R\}$ and let $\gamma$ be the measure on $\Omega_R$ that assigns mass $\gamma_j^k$ to $(k,j)$.
Define
\[
  T: (L^1+L^\infty)(\bbR^n, \sigma_2dx)\rightarrow L^\infty(\Omega_R, d\gamma)
\]
by
\[
  Tf:=\{E_{Q_j^k}^{\sigma_2}|f|\}_{(k,j)\in\Omega_R}, \quad f\in (L^1+L^\infty)(\bbR^n, \sigma_2dx).
\]
Clearly $T$ is sublinear and of strong-type $(\infty,\infty)$ with norm $1$. Next we show that $T$ is of weak-type $(1,q/{p_2})$.
Let $\lambda>0$ and $\{I_i\}_i$ denotes the maximal cubes relative to the collection
\[
  \{Q_j^k: E_{Q_j^k}^{\sigma_2}|f|>\lambda, Q_j^k\subset R\}
\]
We have
\begin{eqnarray*}
\gamma\{Tf>\lambda\}
&=& \sum_{\{(j,k): E_{Q_j^k}^{\sigma_2}|f|>\lambda\}} \gamma_j^k
\le \sum_i \sum_{Q_j^k\subset I_i}\gamma_j^k\\
&\le&\sum_i \sum_{Q_j^k\subset I_i}\int_{E(Q_j^k)}\mathcal{M}_\alpha^{\mathscr{D}}(1_{I_i}\sigma_1, 1_{I_i}\sigma_2)^q v dx\\
&\le&\sum_i \int_{I_i}\mathcal{M}_\alpha^{\mathscr{D}}(1_{I_i}\sigma_1, 1_{I_i}\sigma_2)^q v dx\\
&\le&[\vec{w},v]_{S_{\vec{P},q}}^q\sum_i \sigma_1(I_i)^{q/{p_1}}\sigma_2(I_i)^{q/{p_2}}\\
&\le&[\vec{w},v]_{S_{\vec{P},q}}^q\left(\sum_i \sigma_1(I_i)^{p/{p_1}}\sigma_2(I_i)^{p/{p_2}}\right)^{q/p}\\
&\le&[\vec{w},v]_{S_{\vec{P},q}}^q\left(\sum_i \sigma_1(I_i)\right)^{q/{p_1}}\left(\sum_i \sigma_2(I_i)\right)^{q/{p_2}}\\
&\le&[\vec{w},v]_{S_{\vec{P},q}}^q\sigma_1(R)^{q/{p_1}}\left(\frac{1}{\lambda}\int |f|\sigma_2 dx\right)^{q/{p_2}}.
\end{eqnarray*}
This shows that $T$ is of weak-type $(1,q/{p_2})$ with norm $[\vec{w},v]_{S_{\vec{P},q}}^{p_2}\sigma_1(R)^{{p_2}/{p_1}}$.
Then by Proposition~\ref{prop:inter} we get $T$ is of strong-type $(p_2, q)$ with norm $C[\vec{w},v]_{S_{\vec{P},q}}\sigma_1(R)^{1/{p_1}}$,
which is exactly the following
\[
  \sum_{k,j: Q_j^k \subset R}\left(\frac{\int_{Q_j^k}1_{R}\sigma_1\int_{Q_j^k}|f|\sigma_2}{|Q_j^k|^{2-\alpha/n}} \right)^q v(E(Q_j^k))
  \lesssim [\vec{w},v]_{S_{\vec{P},q}}^q\sigma_1(R)^{q/{p_1}}\|f\|_{L^{p_2}(\sigma_2)}^q.
\]
This completes the proof.
\end{proof}

Now we are ready to prove the following, which is very close to our main result.
\begin{Lemma}\label{lm:ll}
Suppose that $0\le \alpha<2n$, that $1<p_1,p_2<\infty$, that $1/{p}=1/{p_1}+1/{p_2}$, that $1/q=1/p-\alpha/n$ and that $q\ge \max\{p_1,p_2\}$.
Let $(w_1,w_2, v)$ be weights and set $\sigma_i=w_i^{1-p_i'}$, $i=1,2$.
Then
\[
  \|\mathcal{M}_\alpha^{\mathscr{D}}(f_1\sigma_1, f_2\sigma_2)\|_{L^q(v)}\lesssim [\vec{w},v]_{S_{\vec{P},q}}\prod_{i=1}^2 \|f_i\|_{L^{p_i}(\sigma_i)}.
\]
\end{Lemma}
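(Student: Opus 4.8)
The plan is to bootstrap from Lemma~3.1, which already handles the case where the first argument is localized to a dyadic cube $R$ and carries the full factor $\sigma_1(R)^{1/p_1}$, while the second argument ranges freely. The asymmetry in Lemma~3.1 (one slot localized, one slot general) is exactly what a principal-cube/stopping-time decomposition is built to exploit, so I would apply Definition~3.4 to the first function $f_1$ with respect to the measure $\sigma_1\,dx$. Concretely, I would linearize $\mathcal{M}_\alpha^{\mathscr{D}}$ via its sparse family $\{Q_j^k\}$ exactly as in the proof of Lemma~3.1, writing
\[
  \|\mathcal{M}_\alpha^{\mathscr{D}}(f_1\sigma_1,f_2\sigma_2)\|_{L^q(v)}^q
  \asymp \sum_{k,j}\bigl(\mathbb{E}_{Q_j^k}^{\sigma_1}|f_1|\bigr)^q
  \left(\frac{\sigma_1(Q_j^k)\sigma_2(Q_j^k)}{|Q_j^k|^{2-\alpha/n}}\right)^q v(E(Q_j^k)),
\]
and then I would group the sparse cubes $Q_j^k$ according to their minimal containing principal cube $G=\Gamma(Q_j^k)\in\mathcal{G}$.

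The key step is that on each principal cube $G$ the averages of $f_1$ are controlled: by the defining property following Definition~3.4 we have $\mathbb{E}_{Q_j^k}^{\sigma_1}|f_1|\le 4\,\mathbb{E}_{G}^{\sigma_1}|f_1|$ whenever $\Gamma(Q_j^k)=G$. Pulling this constant out of the sum, the inner sum over all $Q_j^k$ with $\Gamma(Q_j^k)=G$ becomes a sum of the same shape as the quantity estimated in Lemma~3.1, but now localized to $G$ and with $f_1$ effectively replaced by the constant-average piece. I would therefore apply Lemma~3.1 with $R=G$ and with the second function taken to be $f_2\,1_G$ (or simply $f_2$, since the sparse cubes under $G$ lie in $G$), obtaining for each $G$ a bound of the form
\[
  \bigl(\mathbb{E}_{G}^{\sigma_1}|f_1|\bigr)^q\,[\vec{w},v]_{S_{\vec{P},q}}^q\,\sigma_1(G)^{q/p_1}\,\|f_2 1_G\|_{L^{p_2}(\sigma_2)}^q.
\]
Summing over $G\in\mathcal{G}$ then reduces the whole estimate to a single scalar inequality in the principal-cube data.

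The main obstacle — and the place where the hypothesis $q\ge\max\{p_1,p_2\}$ is consumed a second time — is the final summation over $G\in\mathcal{G}$. After the reduction I must bound
\[
  \sum_{G\in\mathcal{G}}\bigl(\mathbb{E}_{G}^{\sigma_1}|f_1|\bigr)^q\,\sigma_1(G)^{q/p_1}\,\|f_2 1_G\|_{L^{p_2}(\sigma_2)}^q
\]
by $\bigl(\|f_1\|_{L^{p_1}(\sigma_1)}\|f_2\|_{L^{p_2}(\sigma_2)}\bigr)^q$. Since $q/p_1\ge 1$ and $q/p_2\ge 1$, I would first pass from the $q$-power to a $p$-power by the elementary embedding $\ell^{p}\hookrightarrow\ell^{q}$ (valid because $p\le q$), reducing the exponents to $q/p_1\cdot(p/q)=p/p_1$ and $q/p_2\cdot(p/q)=p/p_2$ summed to the $q/p$ power; this is the same $\ell^p$-into-$\ell^q$ maneuver already used in the weak-type computation of Lemma~3.1. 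I would then split the resulting sum by Hölder's inequality with exponents $p_1/p$ and $p_2/p$ (whose reciprocals sum to one by $1/p=1/p_1+1/p_2$), separating the $f_1$-factor from the $f_2$-factor. The $f_1$-factor $\sum_G(\mathbb{E}_G^{\sigma_1}|f_1|)^{p_1}\sigma_1(G)$ is exactly the quantity controlled by the Carleson-type estimate~(2.3) and is therefore dominated by $\|f_1\|_{L^{p_1}(\sigma_1)}^{p_1}$; the $f_2$-factor collapses by the disjointness of the sets $E(G)$ (or by bounded overlap of the principal cubes) to $\|f_2\|_{L^{p_2}(\sigma_2)}^{p_2}$. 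Collecting the exponents yields the claimed product bound with the correct power of $[\vec{w},v]_{S_{\vec{P},q}}$, completing the proof.
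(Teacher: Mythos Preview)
Your approach via principal cubes is genuinely different from the paper's, which instead reruns the Marcinkiewicz interpolation scheme of Lemma~\ref{lm:l}: the paper puts the measure $\eta$ with masses $\eta_j^k=\bigl(\sigma_1(Q_j^k)\int_{Q_j^k}|f_2|\sigma_2\,/\,|Q_j^k|^{2-\alpha/n}\bigr)^q v(E(Q_j^k))$ on the index set, shows the averaging map $g\mapsto\{E_{Q_j^k}^{\sigma_1}|g|\}$ is of strong type $(\infty,\infty)$ and of weak type $(1,q/p_1)$ (the latter by applying Lemma~\ref{lm:l} on each maximal cube $J_i$), and then invokes Proposition~\ref{prop:inter}.

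Your argument, however, has a real gap at the last step. After H\"older you must control
\[
\sum_{G\in\mathcal{G}}\|f_2 1_G\|_{L^{p_2}(\sigma_2)}^{p_2}=\sum_{G\in\mathcal{G}}\int_G |f_2|^{p_2}\sigma_2,
\]
and you claim this collapses to $\|f_2\|_{L^{p_2}(\sigma_2)}^{p_2}$ ``by the disjointness of the sets $E(G)$ (or by bounded overlap of the principal cubes).'' Neither reason applies. The principal cubes are \emph{nested}: a single point can lie in one cube from every generation $\mathcal{G}_k$, so there is no bounded overlap. The Carleson estimate~(\ref{eq:e2}) controls $\sum_G(E_G^{\sigma_1}|f_1|)^{p_1}\sigma_1(G)$ only because the geometrically growing $\sigma_1$-averages of $f_1$ force $\sigma_1(G)$ to decay along chains; nothing of the sort holds for $\sigma_2$ or $f_2$, which played no role in building $\mathcal{G}$. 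Concretely, if $\sigma_2$ is concentrated near a point belonging to every cube in a long stopping chain $G_0\supsetneq G_1\supsetneq\cdots\supsetneq G_N$, then each summand $\int_{G_k}|f_2|^{p_2}\sigma_2$ equals the full norm and the sum is $\gtrsim N\|f_2\|_{L^{p_2}(\sigma_2)}^{p_2}$. Replacing $1_G$ by $1_{E(G)}$ is not an option either, since the sparse cubes $Q$ with $\Gamma(Q)=G$ may contain children $G'\in\mathcal{G}^*(G)$, so Lemma~\ref{lm:l} on $R=G$ genuinely needs $f_2$ on all of $G$.

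This is precisely the obstacle the paper faces in Section~4, where it \emph{does} run a principal-cube argument (Theorem~\ref{thm:m1}). The fix there requires two ingredients your outline is missing: a preliminary split of the sparse family into $\mathcal{Q}_1\cup\mathcal{Q}_2$ according to which of $(E_Q^{\sigma_i}|f_i|)^{p_i}\sigma_i(Q)$ dominates, and a decomposition $f_2=f_2^0+f_2^\infty$ on each $G$, with the ``overlapping'' piece $f_2^\infty$ controlled through the $\mathcal{Q}_1$-inequality rather than disjointness. Without these, the stopping-time route does not close.
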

\begin{proof}
We use similar notations as in Lemma~\ref{lm:l}. Then we have
\begin{eqnarray*}
&&\int_{\bbR^n}\mathcal{M}_\alpha^{\mathscr{D}}(f_1\sigma_1, f_2\sigma_2)^q v dx\\
&\lesssim&\sum_{k,j}\left(\frac{\int_{Q_j^k}|f_1|\sigma_1\int_{Q_j^k}|f_2|\sigma_2}{|Q_j^k|^{2-\alpha/n}} \right)^q v(E(Q_j^k))\\
&=&\sum_{k,j}(E_{Q_j^k}^{\sigma_1}|f_1|)^q\left(\frac{\sigma_1(Q_j^k)\int_{Q_j^k}|f_2|\sigma_2}{|Q_j^k|^{2-\alpha/n}} \right)^q v(E(Q_j^k))\\
&:=& \sum_{k,j}(E_{Q_j^k}^{\sigma_1}|f_1|)^q\eta_j^k,
\end{eqnarray*}
where
\[
   \eta_j^k:=\left(\frac{\sigma_1(Q_j^k)\int_{Q_j^k}|f_2|\sigma_2}{|Q_j^k|^{2-\alpha/n}} \right)^q v(E(Q_j^k)).
\]
Now let $\Omega:=\{(k,j)\}$ and let $\eta$ be the measure on $\Omega$ that assigns mass $\eta_j^k$ to $(k,j)$.
Define
\[
  S: (L^1+L^\infty)(\bbR^n, \sigma_1dx)\rightarrow L^\infty(\Omega, d\eta)
\]
by
\[
  S(g):=\{E_{Q_j^k}^{\sigma_1}|g|\}_{(k,j)\in\Omega}, \quad g\in (L^1+L^\infty)(\bbR^n, \sigma_1dx).
\]
Clearly $S$ is sublinear and of strong-type $(\infty,\infty)$ with norm $1$. Next we show that $S$ is of weak-type $(1,q/{p_1})$.
Let $\lambda>0$ and $\{J_i\}_i$ denotes the maximal cubes relative to the collection
\[
  \{Q_j^k: E_{Q_j^k}^{\sigma_1}|g|>\lambda\}.
\]
We have
\begin{eqnarray*}
\eta\{Sf>\lambda\}&=& \sum_{\{(j,k):\, E_{Q_j^k}^{\sigma_1}|g|>\lambda\}} \eta_j^k =\sum_i \sum_{Q_j^k\subset J_i}\eta_j^k\\
&\le&\sum_i \sum_{Q_j^k\subset J_i}\int_{E(Q_j^k)}\mathcal{M}_\alpha^{\mathscr{D}}(1_{J_i}\sigma_1, f_2 1_{J_i}\sigma_2)^q v dx\\
&\le&\sum_i \int_{J_i}\mathcal{M}_\alpha^{\mathscr{D}}(1_{J_i}\sigma_1, f_2 1_{J_i}\sigma_2)^q v dx\\
&\le&[\vec{w},v]_{S_{\vec{P},q}}^q\sum_i \sigma_1(J_i)^{q/{p_1}}\|f_2 1_{J_i}\|_{L^{p_2}(\sigma_2)}^{q}\qquad \mbox{(Lemma~\ref{lm:l})} \\
&\le&[\vec{w},v]_{S_{\vec{P},q}}^q\left(\sum_i \sigma_1(J_i)^{p/{p_1}}\|f_2 1_{J_i}\|_{L^{p_2}(\sigma_2)}^p\right)^{q/p}\\
&\le&[\vec{w},v]_{S_{\vec{P},q}}^q\left(\sum_i \sigma_1(J_i)\right)^{q/{p_1}}\left(\sum_i \|f_2 1_{J_i}\|_{L^{p_2}(\sigma_2)}^{p_2}\right)^{q/{p_2}}\\
&\le&[\vec{w},v]_{S_{\vec{P},q}}^q\|f_2 \|_{L^{p_2}(\sigma_2)}^{q}\left(\frac{1}{\lambda}\int |g|\sigma_1 dx\right)^{q/{p_1}}.
\end{eqnarray*}
This shows that $S$ is of weak-type $(1,q/{p_1})$ with norm $[\vec{w},v]_{S_{\vec{P},q}}^{p_1}\|f_2 \|_{L^{p_2}(\sigma_2)}^{p_1}$.
Then by Proposition~\ref{prop:inter} again, we get $S$ is of strong-type $(p_1, q)$ with norm $C[\vec{w},v]_{S_{\vec{P},q}}\|f_2 \|_{L^{p_2}(\sigma_2)}$,
which is exactly the following
\[
  \sum_{k,j}\left(\frac{\int_{Q_j^k}|f_1|\sigma_1\int_{Q_j^k}|f_2|\sigma_2}{|Q_j^k|^{2-\alpha/n}} \right)^q v(E(Q_j^k))
  \lesssim [\vec{w},v]_{S_{\vec{P},q}}^q\prod_{i=1}^2\|f_i\|_{L^{p_i}(\sigma_i)}^q.
\]
This completes the proof.
\end{proof}

Now we are ready to prove Theorem~\ref{thm:main}.

\begin{proof}[Proof of Theorem~\ref{thm:main}]
By Proposition~\ref{prop:p1}, we have
\[
  \mathcal{M}_{\alpha}(f_1\sigma_1, f_2\sigma_2)(x)
  \le C_{n}\sum_{t\in\{0,1/3\}^n}\mathcal{M}_{\alpha}^{\mathscr{D}_t}
    (f_1\sigma_1, f_2\sigma_2)(x).
\]
Then by Lemma~\ref{lm:ll}, we have
\begin{eqnarray*}
  \|\mathcal{M}_\alpha(f_1\sigma_1,  f_2\sigma_2)\|_{L^q(v)}\lesssim [\vec{w},v]_{S_{\vec{P}}}\prod_{i=1}^2 \|f_i\|_{L^{p_i}(\sigma_i)}.
\end{eqnarray*}
It follows that
\begin{eqnarray*}
  \|\mathcal{M}_\alpha\|_{L^{p_1}(w_1)\times  L^{p_2}(w_2)\rightarrow L^q(v)}
  &=& \|\mathcal{M}_\alpha(\cdot\sigma_1,\cdot\sigma_2)\|_{L^{p_1}(\sigma_1)\times  L^{p_2}(\sigma_2)\rightarrow L^q(v)}\\
  &\lesssim&[\vec{w},v]_{S_{\vec{P}}}.
\end{eqnarray*}
On the other hand, it is obvious that
\begin{eqnarray*}
  \|\mathcal{M}_\alpha\|_{L^{p_1}(w_1)\times  L^{p_2}(w_2)\rightarrow L^q(v)}
  &=& \|\mathcal{M}_\alpha(\cdot\sigma_1,\cdot\sigma_2)\|_{L^{p_1}(\sigma_1)\times  L^{p_2}(\sigma_2)\rightarrow L^q(v)}\\
  &\ge&[\vec{w},v]_{S_{\vec{P}}}.
\end{eqnarray*}
This completes the proof.
\end{proof}

\section{Further Discussions}

For simplicity, we consider the special case $m=2$ in this section. First, we have the following result.
\begin{Theorem}\label{thm:m1}
Suppose that $0\le \alpha<2n$, that $1<p_1,p_2<\infty$, that $1/{p}=1/{p_1}+1/{p_2}$ and that $1/q=1/p-\alpha/n$.
Let $(w_1,w_2, v)$ be weights and set $\sigma_i=w_i^{1-p_i'}$, $i=1,2$.
Then $\mathcal{M}_\alpha$ is bounded from $L^{p_1}_{w_1}(\bbR^n)\times L^{p_2}_{w_2}(\bbR^n)$ to $L^q_v(\bbR^n)$
if and only if the following conditions hold
\begin{eqnarray*}
&&C_1:=\sup_{Q: \mbox{cubes in $\bbR^n$}}\frac{\left(\int_{Q}\mathcal{M}_\alpha(\sigma_1 1_Q, f_2 1_Q\sigma_2)^q vdx\right)^{1/q}}{\sigma_1(Q)^{1/{p_1}}\|f_2\|_{L^{p_2}(\sigma_2)}}<\infty,\\
&&C_2:=\sup_{Q: \mbox{cubes in $\bbR^n$}}\frac{\left(\int_{Q}\mathcal{M}_\alpha(f_1 1_Q\sigma_1, 1_Q\sigma_2)^q vdx\right)^{1/q}}{\|f_1\|_{L^{p_1}(\sigma_1)}\sigma_2(Q)^{1/{p_2}}}<\infty.
\end{eqnarray*}
\end{Theorem}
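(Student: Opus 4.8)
The plan is to establish the two directions separately, with necessity being the routine half and sufficiency carrying the whole weight.

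\emph{Necessity.} Assuming $\mathcal{M}_\alpha\colon L^{p_1}(w_1)\times L^{p_2}(w_2)\to L^q(v)$ is bounded, I would simply insert test functions. Fixing a cube $Q$ and $f_2\in L^{p_2}(\sigma_2)$, I take $g_1=\sigma_1 1_Q$ and $g_2=f_2 1_Q\sigma_2$ and apply the boundedness. Since $\sigma_i=w_i^{1-p_i'}$ gives $\sigma_i^{p_i}w_i=\sigma_i$, one computes $\|g_1\|_{L^{p_1}(w_1)}=\sigma_1(Q)^{1/p_1}$ and $\|g_2\|_{L^{p_2}(w_2)}=\|f_2 1_Q\|_{L^{p_2}(\sigma_2)}\le\|f_2\|_{L^{p_2}(\sigma_2)}$, so $C_1\le\|\mathcal{M}_\alpha\|$, and $C_2\le\|\mathcal{M}_\alpha\|$ by the symmetric choice. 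Note that no relation between $q$ and the $p_i$ is used here.

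\emph{Sufficiency: reduction.} By Proposition~\ref{prop:p1} it suffices to bound each dyadic model $\mathcal{M}_\alpha^{\mathscr{D}_t}$, so I fix a grid $\mathscr{D}$ and, exactly as in the proofs of Lemmas~\ref{lm:l} and \ref{lm:ll}, reduce matters to the sparse sum
\[
  \int\mathcal{M}_\alpha^{\mathscr{D}}(f_1\sigma_1,f_2\sigma_2)^q v\,dx\lesssim\sum_{Q\in\mathcal S}(\mathbb E_Q^{\sigma_1}|f_1|)^q(\mathbb E_Q^{\sigma_2}|f_2|)^q\,\tau(Q),\qquad \tau(Q):=\Big(\tfrac{\sigma_1(Q)\sigma_2(Q)}{|Q|^{2-\alpha/n}}\Big)^q v(E(Q)),
\]
where $\mathcal S$ is the sparse family coming from the level sets of $\mathcal{M}_\alpha^{\mathscr{D}}$. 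The crucial point is that $C_1<\infty$ is precisely the conclusion of Lemma~\ref{lm:l} \emph{taken as a hypothesis}, now available with no constraint between $q$ and $p_2$, while $C_2<\infty$ is its transpose; moreover putting $f_2\equiv1$ in $C_1$ gives $[\vec{w},v]_{S_{\vec{P},q}}\le C_1$, so Sawyer's single testing constant is also at hand.

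\emph{Sufficiency: the corona engine.} I would run a parallel principal-cube decomposition, forming the families $\mathcal G^1$ for $(f_1,\sigma_1)$ and $\mathcal G^2$ for $(f_2,\sigma_2)$ as in Definition~\ref{def:d1}, so that $\mathbb E_Q^{\sigma_i}|f_i|\le4\,\mathbb E_{\Gamma^i(Q)}^{\sigma_i}|f_i|$ and, by \eqref{eq:e2}, $\sum_{G\in\mathcal G^i}(\mathbb E_G^{\sigma_i}|f_i|)^{p_i}\sigma_i(G)\lesssim\|f_i\|_{L^{p_i}(\sigma_i)}^{p_i}$. Splitting $\mathcal S$ according to whether $\Gamma^1(Q)\subseteq\Gamma^2(Q)$ or $\Gamma^2(Q)\subsetneq\Gamma^1(Q)$, on the first family I replace the outer average $\mathbb E_Q^{\sigma_2}|f_2|$ by its stopped value, group the cubes by their inner stopping cube $G_1\in\mathcal G^1$, and apply $C_1$ on each $G_1$ to carry out the $\tau$-summation; the second family is treated symmetrically with $C_2$. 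The surviving sums all take the form $\sum_G a_G^{q/p_1}b_G^{q/p_2}$, which I would dominate by $(\sum_G a_G)^{q/p_1}(\sum_G b_G)^{q/p_2}$ using the elementary inequality valid whenever $q/p_1+q/p_2=q/p\ge1$ (which holds because $1/q=1/p-\alpha/n\le1/p$), and then close the argument with the Carleson embedding \eqref{eq:e2} for each $f_i$.

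The main obstacle is exactly the range $q<\min\{p_1,p_2\}$ (which does occur, e.g.\ when $\alpha=0$, where $q=p$), for there the Marcinkiewicz interpolation of Proposition~\ref{prop:inter} underlying Theorem~\ref{thm:main} is unavailable, since it would demand a strong $(p_i,q)$ bound with source exponent $p_i$ exceeding the target $q$. In this regime one cannot place the entire mass of one function on a single corona cube: summing, say, $\|f_2 1_{G_1}\|_{L^{p_2}(\sigma_2)}^{p_2}$ over the nested family $\mathcal G^1$ overcounts, because $\mathcal G^1$ is $\sigma_1$-Carleson but carries no $\sigma_2$-Carleson control. The delicate part of the proof is therefore to keep the two stopping families \emph{coupled}, so that the $f_i$-average suppression guaranteed by \eqref{eq:e2} is never discarded when the opposite measure is distributed across the other family's principal cubes; it is precisely this cross-corona bookkeeping that forces one to use \emph{both} $C_1$ and $C_2$ in tandem rather than a single testing condition, and it is precisely here that the restriction $q\ge\max_i p_i$ present in Theorem~\ref{thm:main} is removed.
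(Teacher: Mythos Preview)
Your necessity argument and the reduction to a sparse sum over a dyadic grid are fine and match the paper. The gap is in the corona step: you correctly diagnose the overcounting obstacle---summing $\|f_2 1_{G}\|_{L^{p_2}(\sigma_2)}^{p_2}$ over a nested family $\mathcal G^1$ that is only $\sigma_1$-Carleson---but you never say how to cure it. ``Keeping the two stopping families coupled'' and ``cross-corona bookkeeping'' name the difficulty without resolving it. Concretely, your parallel-corona split $\Gamma^1(Q)\subseteq\Gamma^2(Q)$ leads, after applying $C_1$ on each $G_1$, to a sum $\sum_{G_1}(\mathbb E_{G_1}^{\sigma_1}|f_1|)^q\sigma_1(G_1)^{q/p_1}b_{G_1}^{q/p_2}$ where either $b_{G_1}=\|f_2 1_{G_1}\|_{L^{p_2}(\sigma_2)}^{p_2}$ (overcounts) or $b_{G_1}=(\mathbb E_{\Gamma^2(G_1)}^{\sigma_2}|f_2|)^{p_2}\sigma_2(G_1)$ (requires $\sum_{G_1:\Gamma^2(G_1)=G_2}\sigma_2(G_1)\lesssim\sigma_2(G_2)$, which is false in general since $\mathcal G^1$ carries no $\sigma_2$-Carleson control). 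Neither closes, and your proposal supplies no third option.

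The paper takes a genuinely different route that avoids the parallel corona altogether. It first performs Treil's splitting of the sparse family,
\[
\mathcal Q_i:=\bigl\{Q:(\mathbb E_Q^{\sigma_i}|f_i|)^{p_i}\sigma_i(Q)\ge(\mathbb E_Q^{\sigma_j}|f_j|)^{p_j}\sigma_j(Q)\bigr\},
\]
and on $\mathcal Q_1$ builds a \emph{single} principal-cube family $\mathcal G$ for $(f_1,\sigma_1)$. For each $G\in\mathcal G$ it decomposes $f_2=f_2^0+f_2^\infty$ with $f_2^0=f_2 1_{G\setminus U(G)}$ and $f_2^\infty=\sum_{G'\in\mathcal G^*(G)}(\mathbb E_{G'}^{\sigma_2}|f_2|)1_{G'}$, then applies $C_1$ on $G$ to each piece. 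The $f_2^0$-sum is harmless because the sets $G\setminus U(G)$ are disjoint. The key point is the $f_2^\infty$-sum: since every $G'\in\mathcal G$ lies in $\mathcal Q_1$, the defining inequality gives $(\mathbb E_{G'}^{\sigma_2}|f_2|)^{p_2}\sigma_2(G')\le(\mathbb E_{G'}^{\sigma_1}|f_1|)^{p_1}\sigma_1(G')$, which converts the dangerous $f_2$-sum into the $f_1$-Carleson sum already controlled by \eqref{eq:e2}. This conversion is the missing idea in your plan; without a device of this kind (or an equivalent one) the argument does not close.
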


\begin{proof}
The necessity is obvious. We only  prove the sufficiency. As in the previous section, it suffices to prove it for the
dyadic fractional maximal operator.

Without loss of generality, we assume that $\|f_i\|_{L^{p_i}(\sigma_i)}=1$, $i=1$, $2$. Note that the general case follows by homogeneity.
Similarly to the proof of Lemma~\ref{lm:ll}, it suffices to estimate the following
\[
  \sum_{Q\in\mathcal{Q}}\left(\prod_{i=1}^2 \frac{1}{|Q|^{1-\alpha/2n}}\int_{Q}|f_i|\sigma_i dy_i\right)^q v(E(Q)).
\]
By the monotone convergence theorem, we can assume that all cubes in $\mathcal{Q}$ are contained in some maximal dyadic cube $\overline{Q}$.
Next, for $i=1,2 $, similarly to \cite{T}, we define
\[
  \mathcal{Q}_i:=\{Q\in\mathcal{Q}: (E_Q^{\sigma_i}|f_i|)^{p_i}\sigma_i(Q)\ge (E_Q^{\sigma_j}|f_j|)^{p_j}\sigma_j(Q)\quad\mbox{for  $j\neq i$}\}.
\]
It is obvious that $\mathcal{Q}=\mathcal{Q}_1\bigcup \mathcal{Q}_2$.
By symmetry, we only need to estimate the following
\begin{eqnarray*}
&&\sum_{Q\in\mathcal{Q}_1}\left(\prod_{i=1}^2 \frac{1}{|Q|^{1-\alpha/2n}}\int_{Q}|f_i|\sigma_i dy_i\right)^q v(E(Q))\\
&=&\sum_{Q\in\mathcal{Q}_1}\left(\prod_{i=1}^2  E_{Q}^{\sigma_i} |f_i|\right)^q\left(\prod_{i=1}^2 \frac{\sigma_i(Q)}{|Q|^{1-\alpha/2n}}\right)^q v(E(Q))\\
&=&\sum_{G\in\mathcal{G}}\sum_{Q\in\mathcal{Q}_1\atop \Gamma(Q)=G}\left(\prod_{i=1}^2  E_{Q}^{\sigma_i} |f_i|\right)^q\left(\prod_{i=1}^2 \frac{\sigma_i(Q)}{|Q|^{1-\alpha/2n}}\right)^q v(E(Q)),
\end{eqnarray*}
where $\mathcal{G}$ is the set of principal cubes with respect to $|f_1|$, $\sigma_1$ and $\mathcal{Q}_1$.
For any $G\in\mathcal{G}$, let $\mathcal{G}^*(G)$ be the collection of maximal cubes $G'\in\mathcal{G}$ such that $G'\subsetneq G$.
Then by the definition of the principal cubes, for any $Q\in\mathcal{Q}_1$ with $\Gamma(Q)=G$ and $G'\in\mathcal{G}^*(G)$, we have
either $G'\subsetneq Q$ or $G'\bigcap Q=\emptyset$.
Denote $U(G)=\bigcup_{G'\in\mathcal{G}^*(G)} G'$.
Then we have
\begin{eqnarray*}
E_{Q}^{\sigma_2} (|f_2|)&=&E_{Q}^{\sigma_2} (|f_2|\cdot 1_{G\setminus U(G)})+E_{Q}^{\sigma_2} (|f_2|\cdot 1_{U(G)})\\
&=&E_{Q}^{\sigma_2} (|f_2|\cdot 1_{G\setminus U(G)})+E_{Q}^{\sigma_2} (\sum_{G'\in\mathcal{G}^*(G)}(E_{G'}^{\sigma_2}|f_2|)1_{G'}).
\end{eqnarray*}
Let
\[
  f_2^0=f_2\cdot 1_{G\setminus U(G)} \quad\mbox{and}\quad f_2^\infty=\sum_{G'\in\mathcal{G}^*(G)}(E_{G'}^{\sigma_2}|f_2|)1_{G'}.
\]
Then we have
\begin{eqnarray*}
&&\sum_{Q\in\mathcal{Q}_1}\left(\prod_{i=1}^2 \frac{1}{|Q|^{1-\alpha/2n}}\int_{Q}|f_i|\sigma_i dy_i\right)^q v(E(Q))\\
&\lesssim& \sum_{l\in\{0,\infty\}}\sum_{G\in\mathcal{G}}(E_G^{\sigma_1}|f_1|)^q\sum_{Q\in\mathcal{Q}_1\atop \Gamma(Q)=G}
\left( E_{Q}^{\sigma_2} |f_2^{l}|\right)^q\cdot\left(\prod_{i=1}^2 \frac{\sigma_i(Q)}{|Q|^{1-\alpha/2n}}\right)^q v(E(Q))\\
&\le&\sum_{l\in\{0,\infty\}}\sum_{G\in\mathcal{G}}(E_G^{\sigma_1}|f_1|)^q\cdot
\int_G \mathcal{M}_\alpha^{\mathscr{D}}(1_G\sigma_1,|f_2^{l}|1_G\sigma_2)^q vdx.
\end{eqnarray*}
By hypothesis, we have
\begin{eqnarray*}
\int_G \mathcal{M}_\alpha^{\mathscr{D}}(1_G\sigma_1,|f_2^{l}|1_G\sigma_2)^q vdx
\lesssim C_1^q \sigma_1(G)^{q/{p_1}}\|f_2^{l}1_G\|_{L^{p_2}(\sigma_2)}^q.
\end{eqnarray*}
It follows that
\begin{eqnarray*}
&&\sum_{G\in\mathcal{G}}(E_G^{\sigma_1}|f_1|)^q
\int_G \mathcal{M}_\alpha(1_G\sigma_1,|f_2^{l}|1_G\sigma_2)^q vdx\\
&\lesssim&C_1^q\sum_{G\in\mathcal{G}}(E_G^{\sigma_1}|f_1|)^q\sigma_1(G)^{q/{p_1}}\|f_2^{l}1_G\|_{L^{p_2}(\sigma_2)}^q\\
&\le&C_1^q\left(\sum_{G\in\mathcal{G}}(E_G^{\sigma_1}|f_1|)^p\sigma_1(G)^{p/{p_1}}
\|f_2^{l}1_G\|_{L^{p_2}(\sigma_2)}^p\right)^{q/p}\\
&\le&C_1^q\left(\sum_{G\in\mathcal{G}}(E_G^{\sigma_1}|f_1|)^{p_1}\sigma_1(G)\right)^{q/{p_1}}
 \left(\sum_{G\in\mathcal{G}}\|f_2^{l}1_G\|_{L^{p_2}(\sigma_2)}^{p_2}\right)^{q/{p_2}}.
\end{eqnarray*}
For $l=0$, we have
\begin{eqnarray*}
\sum_{G\in\mathcal{G}}\|f_2^{l}1_G\|_{L^{p_2}(\sigma_2)}^{p_2}
&=&\sum_{G\in\mathcal{G}}\int_{G\setminus U(G)} |f_2|^{p_2}\sigma_2 dx\\
&\le&\int|f_2|^{p_2}\sigma_2 dx\le 1.
\end{eqnarray*}
For $l=\infty$, we have
\begin{eqnarray*}
\sum_{G\in\mathcal{G}}\|f_2^{l}1_G\|_{L^{p_2}(\sigma_2)}^{p_2}
&=& \sum_{G\in\mathcal{G}}\sum_{G'\in\mathcal{G}^*(G)}(E_{G'}^{\sigma_2}|f_2|)^{p_2}\sigma_2(G')\\
&\le&\sum_{G\in\mathcal{G}}(E_G^{\sigma_1}|f_1|)^{p_1}\sigma_1(G).
\end{eqnarray*}
Then by (\ref{eq:e2}), we have
\[
  \sum_{Q\in\mathcal{Q}_1}\left(\prod_{i=1}^2 \frac{1}{|Q|^{1-\alpha/2n}}\int_{Q}|f_i|\sigma_i dy_i\right)^q v(E(Q))\lesssim C_1^q.
\]
This completes the proof.
\end{proof}

By Theorem~\ref{thm:m1}, we reduce the problem to characterize $C_1$ and $C_2$. By symmetry we concentrate on $C_1$.
Let
\[
  U_Q(f):=\sigma_1(Q)^{-1/{p_1}}\mathcal{M}_\alpha(\sigma_1 1_Q, f 1_Q\sigma_2).
\]
For fixed $Q$, $U_Q$ is a sublinear operator from $L^{p_2}_{\sigma_2}(Q)$ to $L^q_v(Q)$.
It seems difficult to give a characterization for such an operator when $p_2>q$. We do not know whether Sawyer's test condition still applies in this case.


\begin{thebibliography}{EEE}
\bibitem{BL}
J.~Berg and J.~L\"ofstr\"om,
Interpolation Spaces,
Springer-Verlag Berlin, New York, 1976.
\bibitem{Buckley}
S.~Buckley,
Estimates for operator norms on weighted spaces and reverse Jensen inequalities,
Trans. Amer. Math. Soc.,
 340 (1993), 253--272.

\bibitem{CD}
W.~Chen, W.~Dami\'an,
Weighted estimates for the multilinear maximal function,
http://arxiv.org/abs/1304.5999

\bibitem{DLP}
W.~Dami\'{a}n, A.K.~Lerner and C.~P\'erez,
Sharp weighted bounds for multilinear maximal functions and Calder\'{o}n-Zygmund operators,
 http://arxiv.org/abs/1211.5115.


\bibitem{HLMORSU}
T.~Hyt\"{o}nen, M.~Lacey, H.~Martikainen, T.~Orponen, M.~Reguera, E.~Sawyer, I.~Uriarte-Tuero,
Weak and strong type estimates for maximal truncations of Calder\'{o}n-Zygmund operators on $A_p$ weighted spaces ,  	
J. Anal. Math. 118 (2012), 177--220.

\bibitem{HP}
T.~Hyt\"onen and C.~P\'erez,
Sharp weighted bounds involving $A_\infty$,
Analysis \& PDE, (to appear).



\bibitem{LMPT}
M.~Lacey, K.~Moen, C.~P\'erez and R.H.~Torres,
Sharp weighted bounds for fractional integral operators,
J. Funct. Anal.,
259 (2010), 1073--1097.


\bibitem{LSSU1}
M.~Lacey, E.~Sawyer, C.~Shen and I.~Uriarte-Tuero,
Two weight inequality for the Hilbert transform: A real variable characterization, I,
http://arxiv.org/abs/1201.4319.

\bibitem{LSU}
M.~Lacey, E.~Sawyer and I.~Uriarte-Tuero,
A characterization of two weight norm inequalities for maximal singular integrals with one
doubling measure,
J. Anal. \& P.D.E. 5(2012), 1--60.


\bibitem{L}
A.K.~Lerner,
On an estimate of Calder\'on-Zygmund operators by dyadic positive operators,
J. Anal. Math., to appear.

\bibitem{LOPTT}
A.K.~Lerner, S.~Ombrosi, C.~P\'erez, R.H.~Torres and R.~Trujillo-Gonz\'alez,
New maximal functions and multiple weights for the multilinear Calder\'on¨CZygmund theory,
Adv. in Math., 220 (2009), 1222--1264.

\bibitem{LMS}
K.~Li, K.~Moen and W.~Sun,
Sharp weighted inequalities for multilinear fractional maximal operator and fractional integrals,
http://arxiv.org/abs/1304.2973.


\bibitem{MXL}
T.~Mei, Q.~Xue, S.~Lan,
Sharp Weighted Bounds for Multilinear fractional Maximal type Operators with Rough Kernels,
http://arxiv.org/abs/1305.1865


\bibitem{M}
K.~Moen,
Weighted inequalities for multilinear fractional integral operators,
Collect. Math., 60 (2009), 213--238.

\bibitem{M1}
K. Moen,
Sharp one-weight and two-weight bounds for maximal operators,
Studia Mathematica, 194 (2009), 163--180.

\bibitem{Mu}
B.~Muckenhoupt,
Weighted norm inequalities for the Hardy maximal function,
Trans. Amer. Math. J. 19 (1972), 207--226.

\bibitem{MW}
B.~Muckenhoupt and R.~Wheeden,
Weighted norm inequalities for fractional integrals,
Trans. Amer. Math. Soc. 192 (1974) 261--274.

\bibitem{NTV}
F.~Nazarov, S.~Treil, A.~Volberg,
Two weight estimate for the Hilbert transform and corona decomposition for non-doubling measures,
http://arxiv.org/abs/1003.1596.

\bibitem{S}
E.~Sawyer,
A characterization of a two weight norm inequality for maximal operators,
Studia Math., 75 (1982), 1--11.

\bibitem{S1}
E.~Sawyer,
A characterization of two weight norm inequalities for fractional and Poisson integrals,
Trans. Amer. Math. Soc., 308 (1988), 533--545.

\bibitem{SSU}
E.~Sawyer, C.~Shen and I.~Uriarte-Tuero,
A two weight theorem for fractional singular integrals: an expanded version,
http://arxiv.org/abs/1302.5093.

\bibitem{T}
S.~Treil,
A remark on two weight estimates for positive dyadic operators,
http://arxiv.org/abs/1201.1455

\bibitem{W}
J.M. Wilson,
Weighted inequalities for the dyadic square function without dyadic $A_\infty$,
Duke Math. J., 55 (1987), 19--50.


\bibitem{YYZ}
Da.~Yang, Do.~Yang and Y.~Zhou,
Endpoint properties of localized Riesz transforms and fractional integrals associated to Schr\"odinger operators,
Potential Anal, 30 (2009),  271--300.


\end{thebibliography}
\end{document}